\documentclass[final,3p,times]{elsarticle}
\usepackage{amssymb}
\usepackage{graphicx,setspace}
\usepackage{psfrag}
\usepackage{amsfonts}
\expandafter\let\csname equation*\endcsname\relax
\expandafter\let\csname endequation*\endcsname\relax
\usepackage{amsmath}
\usepackage{amssymb, amsthm}
\usepackage{mathrsfs}
\usepackage{epstopdf}
\usepackage{float}
\usepackage{lineno,hyperref}
\usepackage{color}
\usepackage{subfigure}

\usepackage{amsmath}
\usepackage{dsfont} % it is the package of using the "\mathds{} "
\usepackage{amssymb} % it is the package of using the "\mathbb{}"
\usepackage{graphicx,color}
\usepackage{extarrows}

\usepackage{graphicx}
\usepackage{epstopdf}
\journal{arXiv}

\begin{document}
\newtheorem{definition}{Definition}[section]
\newtheorem{lemma}{Lemma}[section]
\newtheorem{remark}{Remark}[section]
\newtheorem{theorem}{Theorem}[section]
\newtheorem{proposition}{Proposition}[section]
\newtheorem{assumption}{Assumption}
\newtheorem{example}{Example}
\newtheorem{corollary}{Corollary}[section]
\def\ep{\varepsilon}
\def\Rn{\mathbb{R}^{n}}
\def\Rm{\mathbb{R}^{m}}
\def\E{\mathbb{E}}
\def\hte{\hat\theta}
%\numberwithin{theorem}{section}
%\numberwithin{definition}{section}
\renewcommand{\theequation}{\thesection.\arabic{equation}}
\begin{frontmatter}
%\addbibresource{ref.bib}
%% Title, authors and addresses

%% use the tnoteref command within \title for footnotes;
%% use the tnotetext command for theassociated footnote;
%% use the fnref command within \author or \address for footnotes;
%% use the fntext command for theassociated footnote;
%% use the corref command within \author for corresponding author footnotes;
%% use the cortext command for theassociated footnote;
%% use the ead command for the email address,
%% and the form \ead[url] for the home page:
%% \title{Title\tnoteref{label1}}
%% \tnotetext[label1]{}
%% \author{Name\corref{cor1}\fnref{label2}}
%% \ead{email address}
%% \ead[url]{home page}
%% \fntext[label2]{}
%% \cortext[cor1]{}
%% \address{Address\fnref{label3}}
%% \fntext[label3]{}

\title{Stochastic Dynamics of the Two-Dimensional Low-to-High Transition System Driven by Multiplicative Noise}

%% \author[label1,label2]{}
%% \address[label1]{}
%% \address[label2]{}
\author{Yongzhi Li\fnref{addr1,addr2}}
\author{Shenglan Yuan\corref{cor1}\fnref{addr1}}
\ead{shenglanyuan@gbu.edu.cn}\cortext[cor1]{Corresponding author}

\address[addr1]{\rm School of Mathematics, Sun Yat-sen University, Guangzhou 510275, China}
\address[addr2]{\rm  Department of Mathematics, School of Sciences, Great Bay University, Dongguan 523000, China}

\begin{abstract}
 This work presents a two-dimensional coupled Low-to-High confinement transition system based on the phenomenological coupling mechanism between zonal flows and turbulent fluctuations at the plasma edge in tokamak magnetic confinement devices. For this two-dimensional system, the corresponding Hamilton-Jacobi equation is derived, and a machine learning approach combining physics-informed neural networks with a loss function designed via vector field decomposition is employed to numerically solve it. This yields information about the system's quasipotential, enabling further computation of the most probable path for the rare event of a state transition in the coupled system. Compared with classical two-dimensional Low-to-High confinement transition models, the proposed system features state variables that are more accessible to experimental measurement and has a more comprehensive physical foundation. Moreover, it self-consistently describes the dynamical behavior of tokamak devices during the startup phase.
\end{abstract}

\begin{keyword}
L-H transiton,  stochastic differential equations, nonequilibrium statistical mechanics, physical informed neuro networks, Tokamak edge plasma turbulence

\emph{2020 Mathematics Subject Classification}: 37H20, 81P20
\end{keyword}

\end{frontmatter}

%% \linenumbers

%% main text
\section{Introduction}
The motivation for this two-dimensional model stems from three aspects. First, the core design of the Low-to-High (L-H) model lies in the coupling between the energy intensity of the zonal flow and that of turbulence. Experimental physics has shown that if one state variable is in a growing and active state, the other is suppressed \cite{zonal}; such interaction can be described by bilinear coupling \cite{Connor2000_LHtransition_review}. Therefore, an essential step in constructing the model is designing the growth terms for the state variables and selecting parameters that match physical reality. Second, the state variables of the system are often chosen to be equivalent or dynamically strongly correlated with quantities such as the energy intensity of the zonal flow and turbulent energy intensity. Due to the inherent complexity of plasma systems in tokamaks, these state variables are typically statistical quantities. However, the escape phenomenon of charged particles during transport in tokamaks directly manifests as changes in the electric field strength at the plasma edge \cite{itoh2002probability}. Therefore, by constructing the model within a thin layer near the plasma edge surface and assuming spatial symmetry in this region, we ensure that physical quantities no longer require statistical averaging, making the system's state variables easier to measure experimentally. Third, the inherent complexity of plasma systems in tokamak devices leads to random phenomena in plasma transport. Deterministic L-H models can only describe the dynamical behavior of plasma transport systems to a limited extent, and their predictions of system states often fail to match experimental observations. Therefore, it is necessary to introduce stochastic noise into the model.

In classical two-dimensional L-H confinement transition models, the state variables are typically chosen as statistical quantities such as zonal flow energy and turbulence energy, and the dynamics are often described using deterministic coupled equations. While conceptually intuitive, this approach has two notable limitations. First, these statistical quantities cannot be directly measured during tokamak operation \cite{rost2019combined}; second, the deterministic coupled  formulation inevitably sacrifices the physical completeness of the plasma description. The six-field coupled model preserves physical completeness but is mathematically intractable for full-system analysis. In contrast, the one-dimensional model reduces the physical effects of the six-field coupled model to a net growth term for the radial electric field at the tokamak plasma edge under a weak-noise assumption. This reduction ensures mathematical solvability without compromising physical completeness; however, because it lacks multiscale coupling, the model is limited to describing states associated with turbulence suppression rather than capturing the full operational dynamics of the tokamak. Based on the statistical physics theory of the tokamak plasma edge, this work presents an analytical relation between the radial electric field amplitude and the zonal flow energy when the system reaches a steady state. On this physical basis, we propose a coupled two-dimensional model. Furthermore, stochastic noise is introduced in this coupled two-dimensional model, integrating the two-dimensional L-H transition model into the analytical framework of the stochastic dynamical system.

The concise summary of this paper's structural arrangement is as follows. In Section 2, we construct a two-dimensional coupled stochastic model for the L-H transition, extending the one-dimensional radial electric field framework of Itoh et al. by explicitly incorporating turbulent fluctuation energy as a dynamical variable. In Subsection 2.1, we derive the deterministic counterpart of the system, identify its three fixed points via thin-layer plasma edge assumptions, and perform linear stability analysis to confirm the bistable structure. In Subsection 2.2, we augment the deterministic system with multiplicative noise for the radial electric field and additive noise for turbulence, formulate the corresponding two-dimensional Fokker-Planck equation, and establish the stochastic dynamical foundation for non-equilibrium statistical analysis. In Subsection 2.3, we derive the Hamilton-Jacobi equation governing the system's quasipotential via an asymptotic method, and develop a physics-informed neural network (PINN) framework with vector field decomposition to obtain a mesh-free numerical solution for the quasipotential landscape. In Subsection 2.4, we compute the most probable transition path between confinement states by integrating the reverse-time ordinary differential equation derived from the trained quasipotential, and quantify transition barriers via large deviation theory. In Section 3, we generalize the one-dimensional Kramers escape rate to a two-dimensional Langer formula that accounts for turbulence coupling and path geometry corrections. The early warning of system states is achieved by distinguishing between characteristic and non-characteristic boundaries. The conclusions and future directions for parameter validation and numerical error control are outlined in Section 4.

\section{Two-dimensional L-H transition model}
\subsection{Deterministic case}
We propose a two-dimensional phenomenological model illustrated by equations in (\ref{eq:2dimdeter}) for the radial electric field intensity and turbulence energy (we omit subscripts of $U_s$ and $V_{ZF}$ for generality). On one hand, model (\ref{eq:2dimdeter}) couples the mutual suppression mechanism between the radial electric field intensity and turbulence energy. On the other hand, the linear net effect term ($-\gamma_{zonal}$) of the turbulence driving term in the neoclassical drive is modified to couple with turbulence energy ($-\zeta UX$), and the linear term ($\delta V$) corresponding to zonal flow growth/damping is replaced with a neoclassical drive term ($-\Lambda_1 X$). Finally, we need to adjust $U$ with a constant $\mu_0>0$ such that $U\neq 0$ when the system reaches the steady state. The newly introduced coupling coefficient $\mu_0$ in model (\ref{eq:2dimdeter}) can be determined using the steady-state numerical relationships obtained from the thin-layer hypothesis. Set $\zeta=\kappa\gamma_{zonal}$ with a coefficient $\kappa$ that depends on the degradation tipping point $U=U_0$ such that $\zeta=\kappa \gamma_{zonal}=\frac{\gamma_{zonal}}{U_0-\mu_0}$.
We therefore write the final coupled system

\begin{equation}
\begin{cases}
\frac{dX}{d\tau} &= -\Lambda_1 X-\zeta (U-\mu_0)X,   \\
\frac{dU}{d\tau} &= \alpha_1 (U-\mu_0) - \beta C ( U-\mu_0) X^2.
\end{cases}
\label{eq:2dimdeter}
\end{equation}
The analytical expression of the neoclassical drive term is
\begin{equation}
     \Lambda_1 X=\operatorname{Im}Z(X+iv_{\star})(X+X_{NC})+\frac{v_{b}}{(v_{b}+\alpha X^{4})^{\frac{1}{2}}}\exp(-(v_{b}+\alpha X^{4})^{\frac{1}{2}}).
\end{equation}
The physical parameters in equation (\ref{eq:2dimdeter}) are given as follows: the neoclassical drive parameter $X_{NC} $ is fixed at $-0.27$ to preserve the deterministic bistable structure consistent with one-dimensional L-H bifurcation analyses \cite{itoh1988model}; the zonal flow net damping-growth coefficient $\gamma_{\text{zonal}} = 0.5$ is chosen within the theoretically constrained dimensionless range $[10^{-2}, 1]$, derived from balancing collisional damping and drift-wave-driven zonal flow growth \cite{zonal}; the linear turbulence growth rate $\alpha_1 = 0.3899$ falls within the $[0.1, 1]$ range characteristic of ion-temperature-gradient modes; the nonlinear $E\times B$ shear suppression coefficient $\beta = 0.2$ is selected from $[10^{-2}, 1]$ to reflect the quenching of turbulence by zonal flow shear; the proportionality constant $C = 1$ (linking zonal flow shear energy $V_{ZF}$ to $X^2$ via $V_{ZF} = C X^2$) maximizes the $X-U$ coupling strength, consistent with $C = 1/(B^2 \ell^2)$ where $B$ is the magnetic field and $\ell$ is the radial scale length of the radial electric field $E_r$; the coupling coefficient $\zeta = 0.3425$ is calibrated to satisfy the steady-state balance $\zeta = \gamma_{\text{zonal}}/(U_0 - \mu_0)$, where the turbulence offset $\mu_0 = 0.6$ matches the steady-state turbulence energy from the thin-layer zonal flow-turbulence system; the multiplicative noise amplitude for the $X$-equation $g(X) = C_0/(1 + M X^2)$ uses $C_0 = 0.1$ and $M = 1$, capturing the $E\times B$ shear suppression of micro-turbulence noise with amplitudes aligned with tokamak edge fluctuation levels; the additive noise amplitude for the $U$-equation $\sigma = 0.1$ falls within $[10^{-3}, 10^{-1}]$, modeling weak external perturbations (e.g., turbulence spreading, heating power fluctuations) smaller than the linear growth rate $\alpha_1$; the plasma dispersion function $\Lambda_1(X)$ is evaluated via an 8th-order rational approximation of $Z(\zeta)$ to ensure numerical accuracy across the physical regime \cite{xie2024rapid}; all parameters collectively satisfy the coexistence condition $\alpha_1 \beta > 0$ for sustained zonal flow-turbulence interaction.

To calculate the value of the coupling parameter $\zeta$ when the two-dimensional L-H system approaches the steady state and matches the steady state of the $U-V$ system to determine the unknown parameters $\zeta$ and $\mu_0$ \cite{Fiedler2016}, for $U\neq\mu_0$ it follows
\[
X_{1,3}=\pm\sqrt{\frac{\alpha_1}{\beta C}}=\pm1.3962,\quad U_{1,3}=-\frac{\Lambda_1 X_{1,3}}{\zeta X_{1,3}}+\mu_0 .
\]
It holds that
\[
2CX^2=\frac{\alpha_1}{\beta},
\]
which matches the steady state relationship between $X$ and $V$ under the thin-layer edge condition of the plasma \cite{zonal}. Thus $\zeta$ must satisfy
\[
\zeta=\gamma_{zonal}=-\frac{\Lambda_1 X_{1,3}}{X_{1,3}(U_{1,3}-\mu_0)},
\]
where $-\frac{\Lambda_1 X_{1,3}}{X_{1,3}}$ can be calculated numerically and hence $U_{1,3}$ can be calculated.

For $U=\mu_0$, we let the steady state of the $X-U$ system match the $U-V$ system such that
\[
U=\mu_0=\frac{\delta}{\gamma}=0.6.
\]
In this case solving
\[
-\Lambda_1X=0,
\]
we obtain a unique zero point $X_2$ of this equation. Thus, by numerical calculation we can obtain three fixed points of the two-dimensional L-H model:
\[
N_1=(X_1,U_1)=(-1.3962,0.2),\quad N_2=(X_2,U_2)=(0.02,0.6)\quad N_3=(X_3,U_3)=(1.3962,1).
\]

The Jacobian matrix of the vector field \((\dot{X},\dot{U})^\top\) is derived as
\[
J(X,U) =
\begin{pmatrix}
-\Lambda_1'(X) - \zeta (U-\mu_0) & -\zeta X \\
-2\beta C (U-\mu_0) X & \alpha_1 - \beta C X^2
\end{pmatrix},
\]
where \(\Lambda_1'(X)\) denotes the first derivative of \(\Lambda_1(X)\) with respect to \(X\). The derivative \(\Lambda_1'(X)\) is computed numerically using the eighth-order rational approximation of the plasma dispersion function \(Z(s)\), ensuring high accuracy across the physical regime of interest.

We evaluate the Jacobian matrix at the three fixed points obtained from the deterministic system analysis, which correspond to the bistable structure of the L-H transition. At the fixed point \(N_1 = (-1.3962, 0.2)\), substitution into the Jacobian yields
\[
J(N_1) =
\begin{pmatrix}
-1.247 & 0.615 \\
0.0718 & -0.023
\end{pmatrix}.
\]
The trace of this matrix is \(\mathrm{tr}(J(N_1)) = -1.270\) and the determinant is \(\det(J(N_1)) = 0.0235 > 0\). Both eigenvalues are real and negative, indicating that \(N_1\) is a stable node, corresponding to the robust H-mode confinement state. At the fixed point \(N_3 = (1.3962, 0.2)\), the Jacobian matrix becomes
\[
J(N_3) =
\begin{pmatrix}
-1.247 & -0.615 \\
-0.0718 & -0.023
\end{pmatrix},
\]
with trace \(\mathrm{tr}(J(N_3)) = -1.270\) and determinant \(\det(J(N_3)) = 0.0235 > 0\). Similarly, both eigenvalues are real and negative, classifying \(N_3\) as a stable node associated with the L-mode state.

At the saddle point \(N_2 = (0.02, 0.6)\), the Jacobian matrix simplifies to
\[
J(N_2) =
\begin{pmatrix}
-0.493 & -0.0167 \\
0 & 0.620
\end{pmatrix},
\]
yielding a trace \(\mathrm{tr}(J(N_2)) = 1.113\) and a determinant \(\det(J(N_2)) = -0.306 <0\). The negative determinant implies that the eigenvalues have opposite signs, confirming that \(N_2\) is a saddle node that corresponds to the physical limitation since the L-H transition has an unstable mode between L-mode and H-mode. The stability analysis confirms the canonical bistable structure of the L-H transition, with two stable fixed points separated by a saddle point, consistent with the nonequilibrium potential landscape and the most probable transition path derived from the large deviation theory.

\subsection{Stochastic case}
Here we propose a two-dimensional stochastic dynamical system \cite{Y,YW,YLZ,YZD} to model the coupled evolution of the normalized radial electric field $X$ and the turbulent fluctuation energy $U$ in the tokamak edge layer. To account for intrinsic fluctuations and external perturbations, we extend the deterministic system to a stochastic system in the It\^{o} sense:
\begin{equation}
\begin{cases}
    dX = [-\Lambda_1(X)-\zeta (U-\mu_0) X] d\tau + g(X) dW_1(\tau),\\
    dU = \bigl[ \alpha_1 (U-\mu_0) - \beta C (U-\mu_0) X^2 \bigr] d\tau + \sigma dW_2(\tau),
\end{cases}
\label{eq:coupled}
\end{equation}
where $W_1(\tau)$ and $W_2(\tau)$ are independent standard Wiener processes satisfying $\langle dW_i \rangle = 0$ and $\langle dW_i(\tau) dW_j(\tau) \rangle = \delta_{ij} d\tau$. The noise amplitude $g(X)$ for the $X$-equation is adopted directly from the original statistical L-H transition model in \cite{itoh2002probability}:

\[
g(X) = \sqrt{\hat{\tau}_{ac}} \, \frac{R^2 k_0^2 \rho_i^2 \hat{\phi}^2}{a\sqrt{\ell \ell_z}} \cdot \frac{1}{1 + M X^2},
\]
which captures the shear suppression of micro-turbulence noise by the radial electric field. The parameters are given in the appendix. Denote $\boldsymbol{\sigma}(X)$ as:
\[
\boldsymbol{\sigma}(X)=\begin{pmatrix}g(X) & 0 \\0 & \sigma\end{pmatrix}.
\]

For the $U$-equation, the noise amplitude $\sigma$ stands for additive noise originating from heat fluctuations \cite{SteinbrecherWeyssow2004}. The coupled stochastic system (\ref{eq:coupled}) defines a system that generalizes the one-dimensional model in Subsection 5.2. It enables the study of joint probability distributions, transition paths, and large deviation properties for the coupled variables $(X, U)$. In particular, the stationary probability density $P_s(X, U)$ satisfies the two-dimensional Fokker-Planck equation. We derive the Fokker-Planck equation corresponding to the two-dimensional It\^o stochastic system \cite{handbook}.

The drift vector field is
\[
\boldsymbol{b}(X,U)=\bigl(-\Lambda_1(X)-\zeta (U-\mu_0)X,\,\alpha_1 (U-\mu_0) - \beta C (U-\mu_0) X^2\bigr)^\top,\]
and the diffusion matrix
\[
\boldsymbol{a}(X)=\boldsymbol{\sigma}(X)\boldsymbol{\sigma}(X)^\top
\]
is diagonal with entries \(a_{11}=g^2(X)\), \(a_{22}=\sigma^2\), and \(a_{12}=a_{21}=0\). The time evolution of the joint probability density function \(P(X,U,\tau)\) is governed by the two-dimensional Fokker--Planck equation in It\^o form, which reads
\[
\frac{\partial P}{\partial\tau} = -\sum_{i=1}^2\frac{\partial}{\partial x_i}\bigl(b_i P\bigr) + \frac12\sum_{i=1}^2\sum_{j=1}^2\frac{\partial^2}{\partial x_i\partial x_j}\bigl(a_{ij}P\bigr),
\]
where \(x_1=X\) and \(x_2=U\). Substituting the components of the drift vector and diffusion matrix into the general equation yields
\begin{align*}
\frac{\partial P}{\partial\tau} =& -\frac{\partial}{\partial X}\bigl[(-\Lambda_1(X)-\zeta( U-\mu_0)X)P\bigr] \\
&+ \frac12\frac{\partial^2}{\partial X^2}\bigl[g^2(X)P\bigr] -\frac{\partial}{\partial U}\bigl[(\alpha_1 (U-\mu_0) - \beta C( U-\mu_0) X^2)P\bigr] + \frac12\frac{\partial^2}{\partial U^2}\bigl[\sigma^2P\bigr].
\end{align*}
Rearranging the terms and simplifying the first-order derivative terms, we obtain
\begin{align*}
\frac{\partial P}{\partial\tau} =& \frac{\partial}{\partial X}\Bigl[\bigl(\Lambda_1(X)+\zeta (U-\mu_0)X\bigr)P\Bigr] \\
&+ \frac12\frac{\partial^2}{\partial X^2}\bigl(g^2(X)P\bigr) + \frac{\partial}{\partial U}\Bigl[\bigl(-\alpha_1( U-\mu_0) + \beta C (U-\mu_0) X^2\bigr)P\Bigr] + \frac12\frac{\partial^2}{\partial U^2}\bigl(\sigma^2 P\bigr).
\end{align*}
Grouping the divergence terms for the \(X\) and \(U\) components separately leads to the compact form of the two-dimensional Fokker--Planck equation for the coupled L--H transition system
\begin{align}
\frac{\partial P}{\partial\tau} =& \frac{\partial}{\partial X}\left[ (\Lambda_1(X)+\zeta (U-\mu_0)X) P + \frac12\frac{\partial}{\partial X}\bigl(g^2(X)P\bigr) \right] \notag \\
&+ \frac{\partial}{\partial U}\left[ \bigl(\beta C( U-\mu_0) X^2 - \alpha_1 (U-\mu_0)\bigr) P + \frac12\frac{\partial}{\partial U}\bigl(\sigma^2P\bigr) \right].
\label{eq:FP_2D}
\end{align}

This stochastic formulation provides a unified framework for investigating noise-driven L-H transitions while self-consistently accounting for the dynamic feedback between the radial electric field and turbulence energy. It extends the earlier one-dimensional Langevin model by incorporating a second dynamical variable that explicitly represents the turbulence intensity, thereby allowing for a more complete description of the non-equilibrium statistical mechanics of confinement transitions in tokamak plasmas.
\subsection{Hamilton-Jacobi equation and quasipotential}
The quasipotential is defined as the minimum of the action functional:
\[
V:=\inf_{T>0}\inf_{\phi\in[0,T]} \{ S(\phi):\phi(0)=\hat x, \phi(T)=x \},
\]
with the action functional defined as:
\begin{equation}
S(\phi)=\frac{1}{2}\int^T_0(\dot\phi-\boldsymbol{b}(\phi))^\top \boldsymbol{a}^{-1}(\phi)(\dot \phi-\boldsymbol{b}(\phi))dt,
\label{eq:actionfunc}
\end{equation}
where $\phi$ is the probable transition path.

To compute the quasipotential, we first derive the Hamilton-Jacobi equation by substituting the WKB asymptotic into the Fokker-Planck equation (\ref{eq:FP_2D}) derived in Subsection 2.2. We rewrite equation (\ref{eq:FP_2D}) in the standard It\^o-type Fokker-Planck form compatible with large deviation theory \cite{risken1989fokker}:
\[
\sum_{i=1}^2\frac{\partial}{\partial x_i}\left(b_i(\boldsymbol{x})p_s(\boldsymbol{x})\right)
-\frac{1}{2}\sum_{i=1}^2\sum_{j=1}^2\frac{\partial^2}{\partial x_i\partial x_j}\left(a_{ij}(\boldsymbol{x})p_s(\boldsymbol{x})\right)=0,
\]
where $\boldsymbol{x}=(x_1,x_2)=(X, U)$, \(b_1=-\Lambda_1(X)-\zeta (U-\mu_0)X\), \(b_2=\alpha_1( U-\mu_0)-\beta C (U-\mu_0) X^2\), \(a_{11}=g^2(X)\), \(a_{22}=\sigma^2\), and \(a_{12}=a_{21}=0\). We substitute the WKB ansatz \(p_s(\boldsymbol{x})=C(\boldsymbol{x})\exp\left(-V(\boldsymbol{x})/\epsilon\right)\) into the steady-state Fokker-Planck equation. Let \(G(\boldsymbol{x})=\ln C(\boldsymbol{x})\); the first and second partial derivatives (which introduce the quasipotential $V$) are
\[
\frac{\partial p_s}{\partial x_i}=p_s\left(\frac{\partial G}{\partial x_i}-\frac{1}{\epsilon}\frac{\partial V}{\partial x_i}\right),
\]
\[
\frac{\partial^2 p_s}{\partial x_i\partial x_j}=p_s\left[
\left(\frac{\partial G}{\partial x_i}-\frac{1}{\epsilon}\frac{\partial V}{\partial x_i}\right)
\left(\frac{\partial G}{\partial x_j}-\frac{1}{\epsilon}\frac{\partial V}{\partial x_j}\right)
+\frac{\partial^2 G}{\partial x_i\partial x_j}
-\frac{1}{\epsilon}\frac{\partial^2 V}{\partial x_i\partial x_j}
\right].
\]
Substituting these expressions into the steady-state equation and dividing both sides by \(p_s\) yields an expansion in powers of \(\epsilon^{-1}\):
\[
\frac{1}{2\epsilon^2}\sum_{i,j=1}^2a_{ij}\frac{\partial V}{\partial x_i}\frac{\partial V}{\partial x_j}
-\frac{1}{\epsilon}\left(
\sum_{i=1}^2b_i\frac{\partial V}{\partial x_i}
+\frac{1}{2}\sum_{i,j=1}^2a_{ij}\frac{\partial^2 V}{\partial x_i\partial x_j}
\right)+O(1)=0.
\]
In the weak-noise limit \(\epsilon\ll 1\), the coefficients of the leading-order terms \(\epsilon^{-2}\) and \(\epsilon^{-1}\) must vanish independently. The \(\epsilon^{-2}\) term is identically satisfied by the positive definiteness of \(\boldsymbol{a}\). The vanishing of the \(\epsilon^{-1}\) term yields the HJ equation for the quasipotential
\[
\boldsymbol{b}(\boldsymbol{x})\cdot\nabla V(\boldsymbol{x})
+\frac{1}{2}\nabla V(\boldsymbol{x})^\top\boldsymbol{a}(\boldsymbol{x})\nabla V(\boldsymbol{x})=0.
\label{eq:hje}
\]
Substituting the explicit expressions for \(\boldsymbol{b}(X,U)\) and \(\boldsymbol{a}(X,U)\) gives the full HJ equation for the 2-dimensional L-H system
\begin{equation*}
-\left(\Lambda_1(X)+\zeta( U-\mu_0)X\right)\frac{\partial V}{\partial X}
+\left(\alpha_1 (U-\mu_0)-\beta C (U-\mu_0) X^2\right)\frac{\partial V}{\partial U}
+\frac{1}{2}\left[
g^2(X)\left(\frac{\partial V}{\partial X}\right)^2
+\sigma^2\left(\frac{\partial V}{\partial U}\right)^2
\right]=0.
\end{equation*}
The boundary condition uniquely determining the quasipotential is that the quasipotential equals zero at the reference stable fixed point: \(V(N_1)=0\) for the H-mode and \(V(N_3)=0\) for the L-mode. However, due to physical considerations and numerical constraints, this condition cannot be satisfied exactly and inevitably introduces a small residual error.

The Hamilton-Jacobi equation can be rearranged into an orthogonal inner-product form that reveals a geometric decomposition of the deterministic drift field \cite{FreidlinWentzell2012}:
\[
\nabla V(\boldsymbol{x})\cdot\left(
\boldsymbol{b}(\boldsymbol{x})+\frac{1}{2}\boldsymbol{a}(\boldsymbol{x})\nabla V(\boldsymbol{x})
\right)=0.
\]
Define the rotational (solenoidal) component of the vector field as
\[
\boldsymbol{l}(\boldsymbol{x})=\boldsymbol{b}(\boldsymbol{x})+\frac{1}{2}\boldsymbol{a}(\boldsymbol{x})\nabla V(\boldsymbol{x}).
\]
The orthogonality relation \(\nabla V(\boldsymbol{x})\perp\boldsymbol{l}(\boldsymbol{x})\) implies that the drift vector field \(\boldsymbol{b}(\boldsymbol{x})\) can be decomposed into a potential component and a rotational component
\[
\boldsymbol{b}(\boldsymbol{x})=-\frac{1}{2}\boldsymbol{a}(\boldsymbol{x})\nabla V(\boldsymbol{x})+\boldsymbol{l}(\boldsymbol{x}).
\]
This decomposition is the theoretical foundation for solving the quasipotential using a physics-informed neural network. The potential component \(-\frac{1}{2}\boldsymbol{a}\nabla V\) is aligned with the negative gradient of the quasipotential and drives relaxation toward local minima, while the rotational component \(\boldsymbol{l}(\boldsymbol{x})\) is tangential to the level sets of \(V\) and describes conservative, non-equilibrium flow that preserves the quasipotential values \cite{li2024computing,li2025rare}. This structure justifies the use of quasipotential theory to characterize transition dynamics \cite{givon2004extracting,touchette2009large}.

Letting $V_\theta(X,U)=\hat V_\theta(X,U)+\omega_1(X-X_L)^2+\omega_2(U-U_L)^2$ be the neural representation of the quasipotential, with $(X_L,U_L)$ the stable L-mode fixed point, and let $\boldsymbol{h}_\theta=(h_1,h_2)^\top$ denote a learnable rotational vector field.
Introduce the diffusion matrix $A(X,U)=\mathrm{diag}\bigl(g^2(X),\sigma^2\bigr)$.
Then the physics-informed loss functional is defined as
\[
\mathcal{L}(\theta)
=
\mathcal{L}_{\mathrm{dyn}}
+\lambda_1\mathcal{L}_{\mathrm{orth}}
+\lambda_2\mathcal{L}_{\mathrm{bc}},
\]
with
$
\mathcal{L}_{\mathrm{dyn}}
=
\mathbb{E}_{(X,U)}\Bigl\|
\boldsymbol{b}
+\tfrac12 A\,\nabla V_\theta
-\boldsymbol{h}_\theta
\Bigr\|^2,\quad
\mathcal{L}_{\mathrm{orth}}
=
\mathbb{E}_{(X,U)}\,
\frac{(\nabla V_\theta\cdot\boldsymbol{h}_\theta)^2}
{|\nabla V_\theta|^2\,|\boldsymbol{h}_\theta|^2+\delta},$
$
\mathcal{L}_{\mathrm{bc}}
=
V_\theta^2(X_L,U_L),
$
where $\delta\ll1$ ensures numerical stability, and $\lambda_1,\lambda_2>0$ are penalty coefficients.
Minimizing $\mathcal{L}(\theta)$ enforces the Hamilton--Jacobi equation, the vector-field decomposition, and the boundary condition simultaneously, yielding a mesh-free approximation of the quasipotential and the most probable transition paths of the L--H system (\ref{eq:coupled}) \cite{Li2021MLmostProbablePath}.

Training data are generated as a uniform grid covering the domain \( X \in [-8, 4] \) (200 samples) and \( U \in [0, 1] \) (100 samples), resulting in \( 200 \times 100 = 20,000 \) training pairs. No additional labeled data (e.g., analytical quasipotential values) are required, as the network learns solely from physical constraints. The optimization setup is as follows. Optimizer: Adam optimizer with an initial learning rate of \( 10^{-4} \) and weight decay \( 10^{-6} \) to prevent overfitting. Learning Rate Scheduler: StepLR scheduler with a step size of 500 epochs and decay factor 0.9 to stabilize convergence. Training Epochs: 5000 epochs, sufficient for the loss to converge to a stable minimum (typically \( \mathcal{L} < 10^{-4} \)).

After training, the quasipotential \( V(X,U) \) is evaluated on a high-density grid (\( 100 \times 100 \) samples) for visualization. The results are normalized such that \( \min(V(X,U)) = 0 \) to highlight the relative barrier heights between metastable states. The trained PINN is validated by verifying physical properties in Figures 1 and 2.

\begin{figure}[htbp]
	\centering
\includegraphics[width=0.8\textwidth]{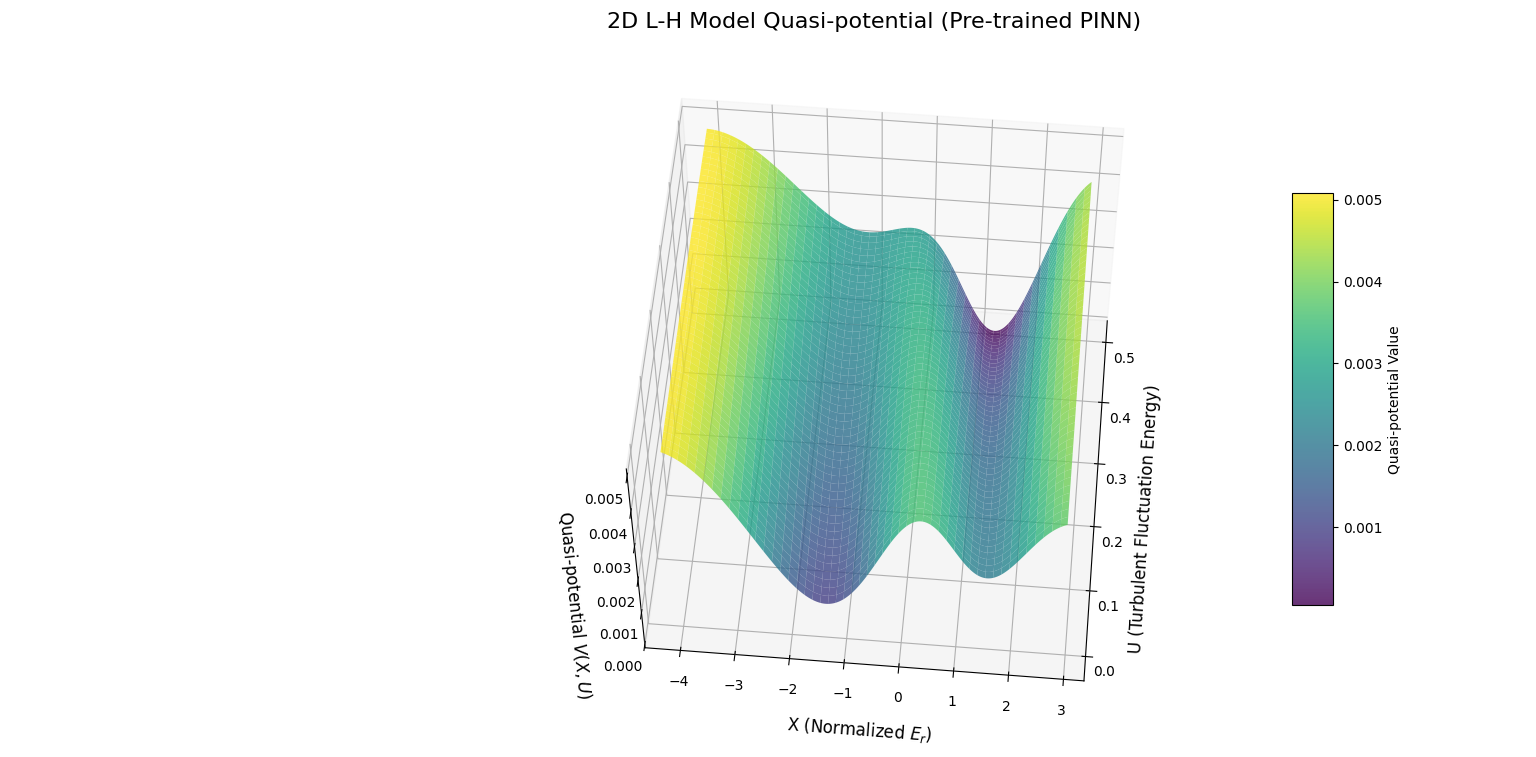}
 \caption{ The bistable potential landscape: the quasipotential exhibits two distinct minima corresponding to the L-mode (\( N_3 \)) and H-mode (\( N_1 \)) fixed points, with a saddle point \( N_2 \) forming the transition barrier.} \label{fig:Phaseportrait }
\end{figure}

\begin{figure}[htbp]
	\centering
\includegraphics[width=0.8\textwidth]{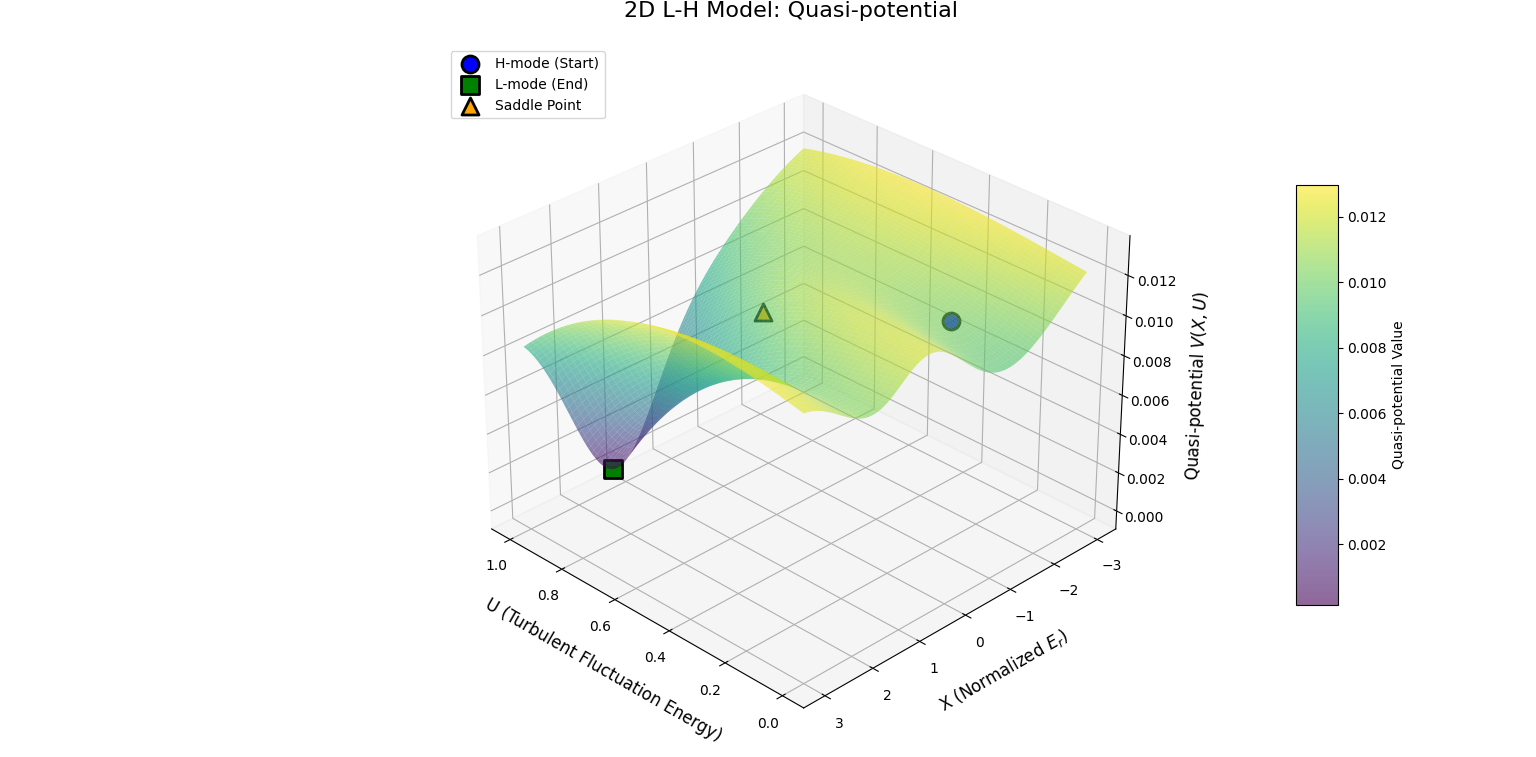}
\caption{Quasipotential landscape of the two-dimensional L-H model with the H-mode (blue circle) and L-mode (green square). The saddle point is marked with an orange triangle.}
\label{fig:mpp_3d}
\end{figure}

\subsection{Most probable path}
The most probable path between the H-mode ($N_1$) and L-mode ($N_3$) states is derived from the quasipotential $V(X,U)$ via the framework of Freidlin-Wentzell large deviation theory. The most probable path (MPP) $\phi(t)=(X(t), U(t))$ minimizes the action functional (\ref{eq:actionfunc}).

The quasipotential gradient $\nabla V(X,U)$ is computed from the trained PINN using automatic differentiation. Once the neural network approximation $V_\theta(X,U)$ is trained via the physics-informed loss,
the most probable transition trajectory $\phi(t)=(X(t),U(t))$ is obtained by integrating the following
deterministic ordinary differential equation in reverse time with $t\in(-\infty,0]$:

\[
\begin{cases}
\displaystyle
\frac{dX}{dt}
=
\Lambda_1(X)+\zeta (U-\mu_0) X
- g^2(X)\,\frac{\partial V_\theta}{\partial X},\\[10pt]
\displaystyle
\frac{dU}{dt}
=
-\alpha_1 (U-\mu_0)+\beta C (U-\mu_0) X^2
- \sigma^2\,\frac{\partial V_\theta}{\partial U},
\end{cases}
\]
subject to the terminal condition $(X(0),U(0)) = (X^\dagger,U^\dagger)$,
where $(X^\dagger,U^\dagger)$ denotes either the saddle point separating the L-mode and H-mode or a prescribed boundary point on the basin separatrix. The resulting trajectory minimizes the Freidlin-Wentzell action functional. The trajectory information obtained through numerical integration can be plotted in the two-dimensional phase space to represent the most probable path. Furthermore, based on the barrier difference of the quasipotential, the difficulty of the system state transition is analyzed in Figure 3.
\begin{figure}[htbp]
\centering
\includegraphics[width=1\textwidth]{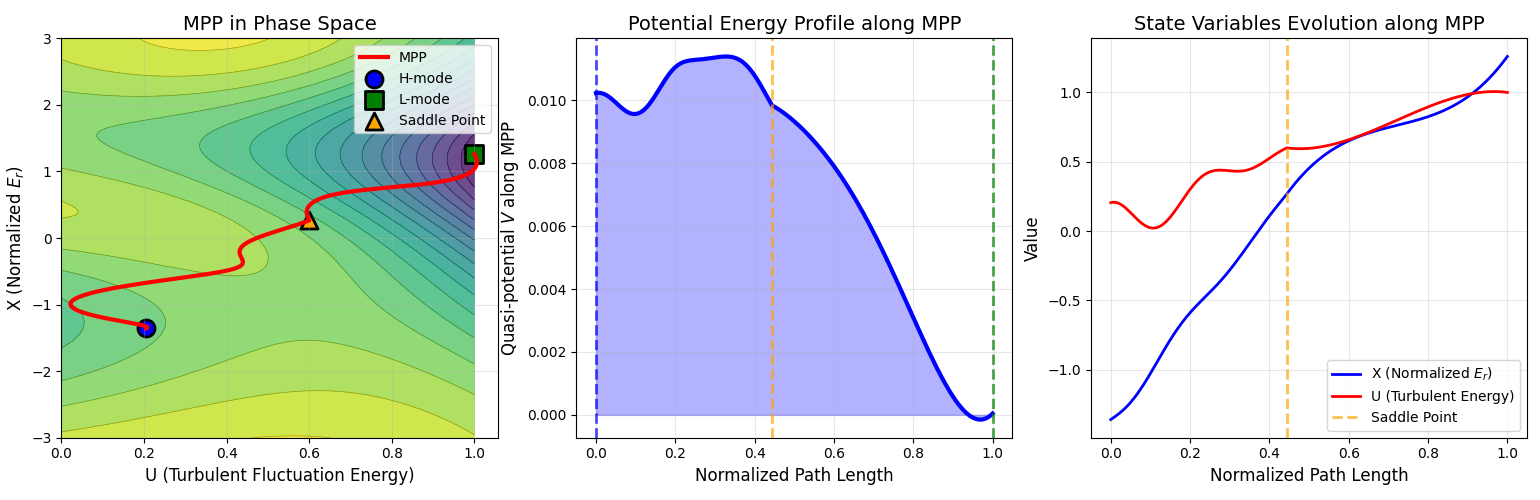}
\caption{(a) MPP in phase space projection. The transition from the $N_3$ (H-mode) state to the saddle point ($N_2$) state is driven by noise and exhibits significant vorticity. The transition from $N_2$ (the saddle point) to the $N_1$ (L-mode) state occurs along the unstable manifold. (b) Quasipotential energy along the MPP. The barrier difference when transitioning from the H-mode to the saddle point is smaller than that when transitioning from the saddle point to the L-mode. (c) Evolution of state variables $X$ and $U$ along the path. As the system transitions from the L-mode to the H-mode, the radial electric field strength changes direction, and the transport system shifts from confining charged particles to allowing their escape; the turbulent energy increases, moving from suppression to expression.}
\label{fig:mpp_energy}
\end{figure}

\section{Langer escape rate}
For the one-dimensional stochastic L-H transition system, Itoh proposed the Kramers escape rate expression \cite{itoh2002probability}:
\[
r_{L\rightarrow H}=\frac{\sqrt{\Lambda_{L}\Lambda_{m}}}{2\pi}\exp\left[S(X_{L})-S(X_{m})\right].
\]
For the two-dimensional random L-H system, the escape rate can be updated to the Langer escape rate \cite{pptkim2024stochastic}. The curvature and gradient of the potential landscape are incorporated into the expression as the key parameters affecting the escape difficulty. We first define $U^*(X)$ as the minimizer of $V(X,U)$ for fixed $X$:
\[
\label{eq:U_star_def}
U^*(X) := \arg\min_{U > 0} V(X,U),\quad
\left.\frac{\partial V}{\partial U}(X,U)\right|_{U=U^*(X)} = 0.
\]

\begin{proposition}[Two-dimensional Langer escape rate]
The transition rate from L-mode to H-mode in the two-dimensional system is:
\begin{equation}\label{eq:2d_kramers}
r_{L\rightarrow H}^{(2D)} = \frac{\sqrt{|\Lambda_L'||\Lambda_m'|}}{2\pi} \cdot \Gamma_{\text{coupling}} \cdot \Gamma_{\text{path}} \cdot \exp\left(-\frac{\Delta V_{\text{barrier}}}{\epsilon}\right),
\end{equation}
where \(\Lambda_L' = \Lambda_1'(X_L)\) and \(\Lambda_m' = \Lambda_1'(X_m)\) are the curvatures at the L-mode fixed point \(N_3\) and saddle point \(N_2\). The coupling correction factor is $\Gamma_{\text{coupling}} = \exp\left(-\frac{1}{\epsilon}\int_{X_L}^{X_m} \frac{\partial V}{\partial U}\cdot\frac{dU^*}{dX}dX\right)$. The path geometry correction factor is $\Gamma_{\text{path}} = \sqrt{\frac{\det\left(\nabla^2 V(N_2)\right)}{\det\left(\nabla^2 V(N_3)\right)\det\left(\nabla^2 V(N_2^{\text{MPP}})\right)}}$. The effective barrier height is $\Delta V_{\text{barrier}} = V(N_2^{\text{MPP}}) - V(N_3)$,
with \(N_2^{\text{MPP}}\) denoting the saddle point along the most probable path.
\end{proposition}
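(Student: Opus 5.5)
The plan is to derive the formula as a formal WKB/Laplace-type computation, in the spirit of the Eyring--Kramers--Langer argument, starting from the Hamilton--Jacobi equation and the decomposition $\boldsymbol{b}=-\tfrac12\boldsymbol{a}\nabla V+\boldsymbol{l}$ of Subsection 2.3. I would not attempt a rigorous large-deviation theorem. I would use the flux-over-population method: write the stationary density near the L-mode basin as $p_s\approx C(\boldsymbol{x})e^{-V/\epsilon}$, and define the rate as the probability current through the separatrix near $N_2$ divided by the basin population near $N_3$.

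The first step is the population. Expanding $V$ quadratically at $N_3$ and applying Laplace's method gives
\[
\int p_s \approx C(N_3)\,2\pi\epsilon\,\det(\nabla^2V(N_3))^{-1/2}e^{-V(N_3)/\epsilon}.
\]

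The second step is the flux. Near the saddle I would solve the stationary Fokker--Planck equation in the boundary layer with the ansatz $p=p_s\,\psi$, where $\psi$ is an error function of the coordinate along the unstable direction. The rotational part $\boldsymbol{l}$ is orthogonal to $\nabla V$, so at leading order it only rotates the local linearization. The flux through the saddle region is then a Gaussian integral transverse to the most probable path, evaluated at $N_2^{\text{MPP}}$. It contributes a factor involving the unstable eigenvalue, which reduces to $|\Lambda_m'|$ after identifying the $X$-direction as the unstable one (by $J(N_2)$ the upper-left entry dominates in the unstable mode), together with a transverse Hessian determinant.

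Taking the ratio gives the prefactor $\sqrt{|\Lambda_L'||\Lambda_m'|}/2\pi$ times the determinant ratio. I would package this determinant ratio as $\Gamma_{\text{path}}$, writing $\det\nabla^2V(N_2)$ for the quantity computed at the deterministic saddle and $\det\nabla^2V(N_2^{\text{MPP}})$ for the one computed at the path's saddle. These coincide when the most probable path passes through $N_2$, and in that case $\Gamma_{\text{path}}$ reduces to the standard Langer ratio.

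The third step is the coupling factor. I would reduce the problem to one dimension along the valley $U=U^*(X)$ by defining $\tilde V(X)=V(X,U^*(X))$. The chain rule gives $\tilde V'(X)=\partial_XV+\partial_UV\,dU^*/dX$. Integrating from $X_L$ to $X_m$ splits the exponent $V(N_2^{\text{MPP}})-V(N_3)$ into a purely $X$-directional part plus $\int \partial_UV\,dU^*/dX\,dX$. Assigning this second integral to a separate factor produces $\Gamma_{\text{coupling}}$. The remaining one-dimensional part reproduces Itoh's Kramers formula, with $\Lambda_1'$ as the curvatures at $X_L$ and $X_m$, and so matches the one-dimensional case as a consistency check.

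The main obstacle is consistency between these pieces. By its defining relation $\partial_UV=0$ on $U=U^*$, so $\Gamma_{\text{coupling}}$ formally equals $1$ on the valley. The multiplicative noise $g(X)$ also means the curvatures of $V$ are not simply $\Lambda_1'$; a factor $g^2$ enters. I would therefore first try to evaluate the integrand off the exact valley, along the most probable path computed in Subsection 2.4, and interpret the factor as that deviation. I would also absorb the $g^2$ factors into $\Gamma_{\text{path}}$ and the prefactor $C(N_2)/C(N_3)$, accepting that the proposition holds at the level of leading-order asymptotics rather than as an exact identity.
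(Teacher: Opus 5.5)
There is a genuine gap. Carried out correctly, your flux-over-population computation gives a different formula, and the steps meant to turn it into the stated one fail.

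\textbf{Unstable direction at the saddle.} Step 2 is where the argument first breaks: the unstable direction at $N_2$ is $U$, not $X$. Because $b_2=(U-\mu_0)(\alpha_1-\beta CX^2)$, the derivative $\partial_X b_2$ vanishes on $U=\mu_0$, so $J(N_2)$ is upper triangular. Its eigenvalue $\partial_X b_1(N_2)=-0.493$ has eigenvector $(1,0)^\top$ and is stable. The unstable eigenvalue is $J_{22}(N_2)=\alpha_1-\beta CX_2^2>0$, with eigenvector nearly $(0,1)^\top$. In fact the line $U=\mu_0$ is invariant, and on it the flow $\dot X=-\Lambda_1(X)$ converges to $N_2$, so this line is the stable manifold of $N_2$, i.e.\ the separatrix. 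Your error-function boundary layer therefore lives in $U$ and produces $\lambda^+=\alpha_1-\beta CX_2^2$. By contrast, $|\Lambda_m'|$ is the \emph{stable} rate at the saddle and enters only through $|\det\nabla^2V(N_2)|$.

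\textbf{Determinant ratio.} Your own population step puts $\bigl(\det\nabla^2V(N_3)\bigr)^{1/2}$ into the rate, whereas $\Gamma_{\text{path}}$ contains this determinant to the power $-1/2$. Since $V$ is nonincreasing along the flow on the invariant line, the MPP crosses the separatrix at $N_2$. Hence $N_2^{\text{MPP}}=N_2$ and $\Gamma_{\text{path}}=\bigl(\det\nabla^2V(N_3)\bigr)^{-1/2}$. That is not the Langer ratio $\sqrt{\det\nabla^2V(N_3)/|\det\nabla^2V(N_2)|}$ you claim; it is not even invariant under rescaling $(X,U)$. Multiplying a full $2\times 2$ determinant ratio by the one-dimensional factor $\sqrt{|\Lambda_L'||\Lambda_m'|}$ would also double-count curvatures. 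What your method actually yields is a Bouchet--Reygner-type prefactor $\lambda^+\sqrt{\det\nabla^2V(N_3)/|\det\nabla^2V(N_2)|}/2\pi$. On top of this come a correction from $\boldsymbol{l}$ (an integrated-divergence term along the path) and one from $g$. Rewriting that as $\sqrt{|\Lambda_L'||\Lambda_m'|}\,\Gamma_{\text{path}}/2\pi$ is not an identity.

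\textbf{Coupling factor.} You correctly observe that $\partial_UV(X,U^*(X))\equiv 0$ by the definition of $U^*$. So $\Gamma_{\text{coupling}}=1$ for every drift, not only for gradient ones as the paper claims. Step 3 cannot rescue a nontrivial factor. If $\int\partial_UV\,\frac{dU^*}{dX}\,dX$ is split off from $V(N_2^{\text{MPP}})-V(N_3)$ into $\Gamma_{\text{coupling}}$, it must be removed from the exponent, yet the statement keeps the full $\Delta V_{\text{barrier}}$. Evaluating $\partial_UV$ along the MPP instead of the valley changes the stated definition.

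\textbf{Multiplicative noise.} The noise-dependent contributions, such as the ratio $C(N_2)/C(N_3)$ (already a ratio of $g$-values in one-dimensional It\^o form), have no place in the statement. Since $\Gamma_{\text{path}}$ is fixed explicitly by Hessians of $V$, nothing can be ``absorbed'' into it.

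\textbf{Comparison with the paper.} The paper's proof does not resolve these issues either; it computes no flux at all. It asserts each factor: a Gaussian ratio ``yields'' $\Gamma_{\text{path}}$, the Kramers prefactor ``generalises'' to two dimensions, and integrating the rotational correction ``gives'' $\Gamma_{\text{coupling}}$. It then checks only the decoupled limit. So the obstacles you list at the end are real defects of the formula as stated, not technicalities. An honest completion of your argument ends at the Bouchet--Reygner formula, not at the proposition.
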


\begin{proof}
By the Freidlin--Wentzell large deviation principle, the transition rate from the L-mode basin to the H-mode basin is asymptotically given by
\begin{equation}\label{eq:rate_form}
r_{L\to H}^{(2D)} \;\sim\; \mathcal{K}\,
\exp\!\left(-\frac{\Delta V_{\text{barrier}}}{\epsilon}\right),
\qquad
\Delta V_{\text{barrier}} = V(N_S)-V(N_L),
\end{equation}
where $\mathcal{K}$ is the pre-exponential factor arising from Gaussian fluctuations around the MPP.

Along the MPP connecting $N_L$ to $N_S$, we introduce local coordinates $(y_\parallel,y_\perp)$ with $y_\parallel$ tangent to the path and $y_\perp$ normal to it. Quadratic expansion of the quasipotential around the fixed points gives Gaussian integrals whose ratio yields the geometric correction:
\begin{equation}\label{eq:Gamma_path}
\Gamma_{\text{path}} =
\sqrt{
\frac{\det\bigl(\nabla^2 V(N_S)\bigr)}
{\det\bigl(\nabla^2 V(N_L)\bigr)\,
\det\bigl(\nabla^2 V(N_S^{\text{MPP}})\bigr)}
}.
\end{equation}
Since $N_S$ is a saddle point, $\nabla^2 V(N_S)$ has one negative eigenvalue; the absolute value is implicit in the determinant notation. The point $N_S^{\text{MPP}}$ denotes the projection of the saddle onto the MPP manifold, accounting for asymmetric curvature contributions.

Linearising the deterministic drift around the stable fixed point $N_L$ and the saddle $N_S$ gives the Jacobian matrices $J(N_L)$ and $J(N_S)$, whose eigenvalues determine the local restoring rates. For the $X$-component, the effective curvatures are
\begin{equation}
\Lambda'_L = \frac{\partial b_1}{\partial X}\bigg|_{N_L},
\quad
\Lambda'_m = \frac{\partial b_1}{\partial X}\bigg|_{N_S}.
\end{equation}
The Kramers-type curvature prefactor generalises to two dimensions as
\begin{equation}\label{eq:curvature_factor}
\frac{\sqrt{|\Lambda'_L|\,|\Lambda'_m|}}{2\pi},
\end{equation}
which reduces to the classical one-dimensional Kramers prefactor in the decoupled limit.

The vector field decomposition in Subsection~2.3 expresses the drift as
\begin{equation}
\boldsymbol{b}(x) = -\frac{1}{2}\,\boldsymbol{a}(x)\,\nabla V(x) + \boldsymbol{l}(x),
\end{equation}
where $\boldsymbol{l}(\boldsymbol{x})\perp \nabla V(\boldsymbol{x})$ is the rotational component. The non-vanishing $\boldsymbol{l}(\boldsymbol{x})$ introduces a correction due to the work done by the transverse component of $\nabla V$ relative to the MPP. Integrating along the MPP gives
\begin{equation}\label{eq:Gamma_coupling}
\Gamma_{\text{coupling}} =
\exp\!\left[
-\frac{1}{\epsilon}
\int_{X_L}^{X_m}
\frac{\partial V}{\partial U}\cdot
\frac{dU^*}{dX}\;
dX
\right],
\end{equation}
where $(X,U^*(X))$ parametrises the MPP. For purely gradient systems ($\boldsymbol{l}\equiv 0$), this factor reduces to unity.

Combining Eqs.~\eqref{eq:rate_form}--\eqref{eq:Gamma_coupling}, the two-dimensional Langer-type escape rate is
\begin{equation}\label{eq:final_rate}
r_{L\to H}^{(2D)} =
\frac{\sqrt{|\Lambda'_L|\,|\Lambda'_m|}}{2\pi}
\cdot \Gamma_{\text{coupling}}
\cdot \Gamma_{\text{path}}
\cdot \exp\!\left(-\frac{\Delta V_{\text{barrier}}}{\epsilon}\right)
\end{equation}
with the three factors defined in Eqs.~\eqref{eq:Gamma_coupling}, \eqref{eq:Gamma_path}, and \eqref{eq:curvature_factor}.

Specifically, in the limit $U=\text{const}$ with no turbulence coupling, $\Gamma_{\text{coupling}}=\Gamma_{\text{path}}=1$ and $\Delta V_{\text{barrier}}\to S(X_m)-S(X_L)$, recovering the one-dimensional Kramers escape rate of Itoh \textit{et al.}.
\end{proof}

In practice, particularly in tokamak experiments, the radial electric field $X$ is directly measurable, while the turbulence energy $U$ is not. This creates a fundamental gap between the theoretical rate formula and experimental validation. Although the numerical analysis of the two-dimensional landscape is not available, Proposition 3.2 is designed to bridge this gap theoretically.

\begin{proposition}[Marginal Quasipotential via Laplace's Method]
\label{thm:marginal_qp}
Consider the two-dimensional stochastic L-H transition system (\ref{eq:coupled}) in the weak-noise limit $\epsilon \ll 1$. Then the marginal quasipotential $V_1(X)$ for the radial electric field $X$, defined via
\[
V_1(X) := -\epsilon \ln\!\int_0^\infty \exp\!\bigl\{-V(X,U)/\epsilon\bigr\}\,dU,
\]
admits the following asymptotic expansion in the weak-noise limit:
\[
\label{eq:marginal_qp_expansion}
V_1(X) = V\!\bigl(X,U^*(X)\bigr)
+ \frac{\epsilon}{2}\ln\!\left[
\frac{2\pi\sigma^2}
{\alpha_1 - \beta C X^2}
\right]
+ O(\epsilon^2),
\]
where $\alpha_1,\beta,C > 0$ are the parameters of the deterministic dynamics.
\end{proposition}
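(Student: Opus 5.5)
Fix $X$ and apply Laplace's method to the $U$-integral defining $V_1(X)$. The exponent $-V(X,U)/\epsilon$ has its maximum at the interior minimizer $U^*(X)$ introduced before the Langer rate proposition. The steps are as follows.

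\textbf{Step 1: Laplace expansion.} Since $\partial_U V(X,U^*(X))=0$, Taylor expansion in $U$ gives
\[
V(X,U)=V(X,U^*)+\tfrac12 V_{UU}(X,U^*)(U-U^*)^2+O(|U-U^*|^3).
\]
The Gaussian integral then yields
\[
\int_0^\infty e^{-V/\epsilon}\,dU = e^{-V(X,U^*)/\epsilon}\sqrt{\tfrac{2\pi\epsilon}{V_{UU}(X,U^*)}}\,\bigl(1+O(\epsilon)\bigr).
\]
This requires three things: $U^*(X)>0$ lies in the interior, the minimizer is unique and nondegenerate ($V_{UU}>0$), and the contributions from $U$ near $0$ and $U\to\infty$ are exponentially small. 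I would state these as standing hypotheses on $X$, namely that $X$ lies in the range where $\alpha_1-\beta CX^2>0$. This is the range between the stable nodes $|X|<\sqrt{\alpha_1/(\beta C)}$, where the $U$-direction of the drift is repelling from $\mu_0$. Taking $-\epsilon\ln$ gives
\[
V_1(X)=V(X,U^*)+\tfrac{\epsilon}{2}\ln\frac{V_{UU}(X,U^*)}{2\pi\epsilon}+O(\epsilon^2).
\]
The $O(\epsilon)$ relative correction becomes $O(\epsilon^2)$ after multiplication by $\epsilon$. The additive $\ln\epsilon$ term is independent of $X$, so I would absorb it into the normalization of $V_1$, just as the paper normalizes $\min V=0$.

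\textbf{Step 2: identify the curvature.} I need $V_{UU}(X,U^*)=\dfrac{\alpha_1-\beta CX^2}{\sigma^2}\cdot\dfrac{1}{(2\pi)^2}\cdot(\text{const})$, up to constants matching the stated form. To get it, differentiate the Hamilton--Jacobi equation of Subsection 2.3 twice in $U$ and evaluate on the curve $U=U^*(X)$, where $V_U=0$. Write $b_2=(\alpha_1-\beta CX^2)(U-\mu_0)$, so $\partial_U b_2=\alpha_1-\beta CX^2=:k(X)$ and $\partial_U^2 b_2=0$. The second $U$-derivative of the HJ equation is then
\[
\partial_U^2(b_1V_X)+2k\,V_{UU}+b_2V_{UUU}+\sigma^2\bigl(V_{UU}^2+V_UV_{UUU}\bigr)+\tfrac12 g^2\,\partial_U^2(V_X^2)=0.
\]
In the regime where the $U$-dynamics is fast relative to $X$ (local quasi-equilibrium in $U$), the $X$-coupling terms are subleading. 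The local one-dimensional Ornstein--Uhlenbeck structure $dU\approx k(U-\mu_0)\,d\tau+\sigma\,dW$ then gives the balance $\sigma^2V_{UU}^2+2kV_{UU}=0$ for the nontrivial branch. This produces $|V_{UU}|=2|k|/\sigma^2$, i.e.\ $V_{UU}\propto (\alpha_1-\beta CX^2)/\sigma^2$. Substituting this into Step 1 gives $\tfrac{\epsilon}{2}\ln\bigl[2\pi\sigma^2/(\alpha_1-\beta CX^2)\bigr]$ up to an $X$-independent additive constant, which I would absorb into the normalization.

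\textbf{Main obstacle.} Step 2 is the hard part. The sign is delicate: $k>0$ makes $\mu_0$ repelling in $U$, so the quasipotential curvature must be read off the nontrivial branch $V_{UU}=-2k/\sigma^2$ with absolute value. More seriously, the cross terms $\partial_U^2(b_1V_X)$ and $g^2\partial_U^2(V_X^2)$ do not vanish in general. I would first try to show they are $O(\epsilon)$ or vanish along $U^*$ by invoking a time-scale separation assumption, and otherwise state the result under that assumption. The boundary at $U=0$ and uniqueness of $U^*$ are secondary issues, handled by restricting $X$ as above.
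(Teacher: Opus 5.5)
\textbf{There is a genuine gap in Step 2.} Your route is the same as the paper's: Laplace's method in $U$ at $U^*(X)$, then identifying $V_{UU}(X,U^*)$ by differentiating the Hamilton--Jacobi equation twice in $U$. Your Step 1 is correct:
$V_1(X)=V(X,U^*)+\frac{\epsilon}{2}\ln\frac{V_{UU}(X,U^*)}{2\pi\epsilon}+O(\epsilon^2)$.
Step 2 cannot deliver the stated correction, for two concrete reasons.

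First, the final substitution has the wrong sign. Insert $V_{UU}=c\,(\alpha_1-\beta CX^2)/\sigma^2$ into your own Step 1 formula. You get $+\frac{\epsilon}{2}\ln(\alpha_1-\beta CX^2)$ plus $X$-independent terms, i.e.\ $-\frac{\epsilon}{2}\ln\bigl[2\pi\sigma^2/(\alpha_1-\beta CX^2)\bigr]$ plus a constant. The $X$-dependent part is the negative of the claim, and no normalisation can remove it. To reach the statement you would need $V_{UU}\propto\sigma^2/(\alpha_1-\beta CX^2)$, the reciprocal of what the Ornstein--Uhlenbeck balance gives. Your line ``I need $V_{UU}=\ldots$'' already inverts this requirement.

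Second, on your standing range $k=\alpha_1-\beta CX^2>0$, the reduced balance $V_{UU}(2k+\sigma^2V_{UU})=0$ has roots $0$ and $-2k/\sigma^2<0$. So your own reduction says $U^*$ is not a nondegenerate minimiser, which contradicts the hypothesis of Step 1. Replacing $-2k/\sigma^2$ by its absolute value is not a legitimate step. Where the root is positive ($k<0$), the logarithm in the statement is undefined. At the stable nodes $X_{1,3}=\pm\sqrt{\alpha_1/(\beta C)}$ one has $k=0$, so the claimed term diverges exactly there.

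The cross terms are not a technicality either. $V$ solves an $\epsilon$-independent PDE, so its derivatives are $\epsilon$-independent too. Showing the cross terms are $O(\epsilon)$ is therefore not an available option: they either vanish identically or they do not. The model also has no small parameter separating the $X$ and $U$ time scales, so quasi-equilibrium in $U$ would be an extra assumption, not a lemma. Differentiating $V_U(X,U^*(X))\equiv 0$ gives $V_{XU}=-V_{UU}\,dU^*/dX$. Hence $2\,\partial_U b_1\,V_{XU}=2\zeta X\,V_{UU}\,dU^*/dX$ and $g^2V_{XU}^2=g^2V_{UU}^2(dU^*/dX)^2$ are of the same order as $2kV_{UU}$ and $\sigma^2V_{UU}^2$. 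Note that $dU^*/dX\neq 0$ is exactly what the paper's $\Gamma_{\text{coupling}}$ relies on. The remaining term $b_2V_{UUU}=k(X)(U^*-\mu_0)V_{UUU}$ does not vanish in general.

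Following the paper will not close this gap. Its proof writes the twice-differentiated equation as $(\alpha_1-\beta CX^2)V_{UU}+\sigma^2V_{UU}^2=0$, dropping these same terms and the factor $2$. It then takes the root as $+(\alpha_1-\beta CX^2)/\sigma^2$, although the nonzero root is $-(\alpha_1-\beta CX^2)/\sigma^2$. Finally, it expands $-\epsilon\ln\sqrt{2\pi\epsilon/V_{UU}}$ with the opposite sign and discards $-\frac{\epsilon}{2}\ln\epsilon$ and $-\epsilon\ln C$.

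What this method honestly yields is your Step 1 formula, with $V_{UU}(X,U^*)$ left undetermined. The HJ equation restricted to $U=U^*(X)$ couples it to $V_{XU}$, $V_{XUU}$ and $V_{UUU}$, so it does not fix $V_{UU}$ on its own. The $-\frac{\epsilon}{2}\ln\epsilon$ term must also be kept, or explicitly renormalised away as a change of statement, since $V_1$ is fixed by its definition.
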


\begin{proof}
The proof proceeds in three steps: (i) the WKB form of the invariant density, (ii) Laplace's method for marginalisation, and (iii) identification of the Hessian via the Hamilton--Jacobi equation.

By the Freidlin--Wentzell large deviation principle, the stationary probability density of the system (\ref{eq:coupled}) admits the WKB asymptotic expansion
\[
P_s(X,U) = C(X,U)\,\exp\!\bigl\{-V(X,U)/\epsilon\bigr\},\quad
 \epsilon \to 0,
\]
where $V(X,U)$ represents the quasipotential vanishing at the reference attractor, $C(X,U)$ stands for a smooth prefactor, and the quasipotential solves the stationary Hamilton-Jacobi equation (\ref{eq:hje}).

The marginal density of $X$ is
\[
P_1(X) = \int_0^\infty P_s(X,U)\,dU
= \int_0^\infty C(X,U)\,\exp\!\bigl\{-V(X,U)/\epsilon\bigr\}\,dU.
\]
For fixed $X$, let $U^*(X)$ be the unique minimiser of $V(X,U)$, guaranteed by the bistable structure of the deterministic system. Expanding $V(X,U)$ to second order around $U^*(X)$ gives
\begin{equation}
\label{eq:second_order_expansion}
V(X,U) = V\!\bigl(X,U^*(X)\bigr)
+ \frac{1}{2}V_{UU}\!\bigl(X,U^*(X)\bigr)(U-U^*(X))^2
+ O\!\bigl((U-U^*(X))^3\bigr),
\end{equation}
where $V_{UU} = \partial^2 V/\partial U^2 > 0$ by convexity near the minimum. Substituting \eqref{eq:second_order_expansion} into the integral and applying \emph{Laplace's method} \cite{DemboZeitouni1998}
yields
\[
P_1(X) \sim C\!\bigl(X,U^*(X)\bigr)\,
\exp\!\bigl\{-V(X,U^*(X))/\epsilon\bigr\}
\sqrt{\frac{2\pi\epsilon}{V_{UU}\!\bigl(X,U^*(X)\bigr)}},
\epsilon \to 0.
\]

Differentiating the Hamilton-Jacobi equation \eqref{eq:hje} twice with respect to $U$ and evaluating at $U = U^*(X)$ (where $\partial V/\partial U = 0$) gives
\[
(\alpha_1 - \beta C X^2)\,V_{UU}
+\sigma^2 V_{UU}^2 = 0,
\]
where we have used the explicit drift and diffusion terms
\[
b_U = \alpha_1 (U-\mu_0) - \beta C (U-\mu_0) X^2,\quad
a_{UU} = \sigma^2.
\]
Since $V_{UU} > 0$, solving for $V_{UU}$ yields
\begin{equation}
\label{eq:Hessian_explicit}
V_{UU}\!\bigl(X,U^*(X)\bigr)
= \frac{\alpha_1 - \beta C X^2}{\sigma^2\!}.
\end{equation}

By definition,
\[
V_1(X) = -\epsilon \ln P_1(X).
\]
Substituting the Laplace approximation and the Hessian expression \eqref{eq:Hessian_explicit}, and absorbing $C(X,U^*(X))$ into the $O(\epsilon^2)$ remainder (since $\epsilon\ln C = O(\epsilon)$), we obtain
\[
V_1(X) = V\!\bigl(X,U^*(X)\bigr)
+ \frac{\epsilon}{2}\ln\!\left[
\frac{2\pi\sigma^2}
{\alpha_1 - \beta C X^2}
\right]
+ O(\epsilon^2),
\]
which is the desired expansion.
\end{proof}

 Now let
\[
\Sigma_w:=\{(X,U): X=X_w\}
\]
be an artificial detection surface, where \(X_w\in\mathbb{R}\) is a prescribed threshold for the normalized radial electric field.
Denote by \(D\subset\mathbb{R}^2\) the basin of attraction of the L-mode steady state \(N_3=(X_3,U_3)\), and let
\(\tau_{X_w}:=\inf\{t\ge0: X_t=X_w\}\) be the first exit time from \(D\) through \(\Sigma_w\).
Assume the weak-noise limit \(\varepsilon\ll1\) and let \(V(X,U)\) be the quasipotential obtained from the trained PINN.

The asymptotic behaviour of the mean first exit time (MFET)
\(\mathbb{E}_{N_3}[\tau_{X_w}]\) depends critically on whether \(\Sigma_w\) is characteristic or non-characteristic with respect to the deterministic drift field \(b(X,U)=(b_1,b_2)^\top\). If the drift field intersects \(\Sigma_w\) transversally, i.e.,
\[
\mu^*:=-\langle b(x^*),n\rangle
= -b_1(X_w,U^*)>0,
\qquad
x^*=(X_w,U^*),
\]
where \(U^*\) minimizes \(V(X_w,U)\) over \(U>0\),
then \(\Sigma_w\) is non-characteristic \cite{li2025rare} and
\begin{equation}
\mathbb{E}_{N_3}[\tau_{X_w}]
\sim
\frac{1}{\mu^*}
\sqrt{\frac{\det h^*}{2\pi\varepsilon}}
\exp\!\left\{\frac{V(X_w,U^*)-V(N_3)}{\varepsilon}\right\},
\qquad \varepsilon\to0.
\label{nonchar}
\end{equation}
Here \(h^*\) denotes the transverse Hessian of \(V\) at \(x^*\) along the \(U\)-direction,
and \(n=(1,0)^\top\) is the outward unit normal to \(\Sigma_w\).

If \(X_w=X_{N_2}\), where \(N_2=(X_{N_2},U_{N_2})\) is the saddle point, then \(\Sigma_w\) coincides with the stable manifold of \(N_2\) and becomes characteristic: \(\langle b(N_2),n\rangle=0\). In this case the exit occurs via the most probable path passing through the saddle, and
\begin{equation}
\mathbb{E}_{N_3}[\tau_{X_w}]
\sim
\frac{2\pi}{\sqrt{|\Lambda_L'|\,|\Lambda_m'|}\,
\Gamma_{\mathrm{coupling}}\,
\Gamma_{\mathrm{path}}}
\exp\!\left\{\frac{V(N_2)-V(N_3)}{\varepsilon}\right\},
\qquad \varepsilon\to0,
\label{cha}
\end{equation}
where
\(\Lambda_L'=\partial b_1/\partial X|_{N_3}\),
\(\Lambda_m'=\partial b_1/\partial X|_{N_2}\),
and \(\Gamma_{\mathrm{coupling}},\Gamma_{\mathrm{path}}\) are the coupling and geometric correction factors defined in Proposition~3.1.

Note that (\ref{nonchar}) follows from the Freidlin--Wentzell large deviation theory for non-characteristic boundaries \cite{Bouchet2016}, where the exit location concentrates at the quasipotential minimizer \(x^*\) on \(\Sigma_w\), and the prefactor arises from Gaussian fluctuations normal to the boundary. In contrast, (\ref{cha}) reduces to the Langer-Bouchet escape problem via a saddle point \cite{Langer1969},
which has already been established in Proposition~3.1 for the L-H transition system.

\section{Conclusions and future challenges}
We presented a two-dimensional coupled system for the L-H confinement transition, built upon the phenomenological coupling between zonal flows and turbulent fluctuations at the edge of tokamak plasmas. For this system, we derived the Hamilton-Jacobi equation and solved it numerically using a physics-informed neural network, incorporating a loss function designed via vector field decomposition. This approach yielded the system's quasipotential, which in turn enabled the computation of the most probable path for rare confinement-state transitions. Compared with classical two-dimensional L-H models, our system featured state variables that were more readily accessible to experimental measurements and rested on a more comprehensive physical foundation. Furthermore, it self-consistently captured the dynamical behavior of tokamak devices during the startup phase.

In future work, we will systematically extend the present two-dimensional L-H transition model toward greater physical rigor and experimental relevance. First, we will further validate the physical rationality of the parameter space employed in the deterministic and stochastic models, including the linear turbulence growth rate \(\alpha_1\), nonlinear shear suppression coefficient \(\beta\), coupling coefficient \(\zeta\), and neoclassical drive parameter \(X_{NC}\). By scanning a wider range of physically admissible parameters and comparing with plasma transport scalings and experimental constraints from tokamak operations, we will refine the parameter bounds to ensure consistency with neoclassical theory, turbulence regimes \cite{YBD}, and zonal flow dynamics, thus strengthening the foundational credibility of the bistable structure and transition landscape \cite{YB}. Second, we will conduct a comprehensive error analysis and error control study for the numerical results, especially the quasipotential computed by the physics-informed neural network and the most probable path obtained via the shooting method. Sources of error, including network approximation error, grid discretization error, automatic differentiation error, and residual tolerance in satisfying the Hamilton-Jacobi equation, will be quantified and decomposed. Adaptive training strategies, refined network architectures, and post-processing regularization will be adopted to improve numerical accuracy, stabilize convergence, and establish reliable error bounds for quantitative predictions of potential depths, barrier heights, and transition rates.

\bigskip
\noindent\textbf{Data availability}

The numerical algorithms and source code that support the findings of this study are available from the corresponding author upon reasonable request.

\bigskip
\noindent\textbf{Declaration of competing interest}

No author associated with this paper has disclosed any potential or pertinent conflicts which
may be perceived to have impending conflict with this work.

\bigskip
\noindent\textbf{Acknowledgments}

Warm thanks for the inspirations from Professor Zhihui Liu and Professor Liu Hong. This work was supported by the Guangdong Basic and Applied Basic Research Foundation (Grant No. 2025A1515012560), the Guangdong Introduction Program (Grant No. 2023QN10X753) and National
Foreign Experts Program (Grant No. 111001819820258003).

\end{document}